%
%
\documentclass{article}

\usepackage[a4paper,top=2.54cm,bottom=2.54cm,left=3.17cm,right=3.17cm,%
            includehead,includefoot]{geometry}


\usepackage{amsmath,amssymb,amsfonts,amsthm}
\usepackage{graphicx}
\usepackage{float}
\usepackage{xcolor}
\usepackage[numbers,square,sort&compress]{natbib}
\usepackage{hyperref}
  \hypersetup{colorlinks,citecolor=blue,linkcolor=blue,breaklinks=true}
\usepackage{epstopdf}
\usepackage{amsthm,amsmath,amssymb}

\usepackage{mathrsfs}

\makeatletter

\newcommand{\Rmnum}[1]{\expandafter\@slowromancap\romannumeral #1@}
\makeatother

\usepackage{yhmath} 


\allowdisplaybreaks

\newtheorem{theorem}{Theorem}[section]
\newtheorem{lemma}{Lemma}[section]
\newtheorem{corollary}[theorem]{Corollary}
\newtheorem{proposition}[theorem]{Proposition}


\newcommand{\bbm}{\begin{bmatrix}}
\newcommand{\ebm}{\end{bmatrix}}

\begin{document}

\title{\uppercase{the logarithmic sobolev inequality for a submanifold in manifold with nonnegative sectional curvature }}

\author{
  Chengyang Yi
    \thanks{School of Mathematical Sciences, East China Normal University, 500 Dongchuan Road, Shanghai 200241,
P. R. of China E-mail address: 52195500013@stu.ecnu.edu.cn. }
  \and
  Yu Zheng
    \thanks{School of Mathematical Sciences, East China Normal University, 500 Dongchuan
Road, Shanghai 200241, P. R. of China E-mail address: zhyu@math.ecnu.edu.cn.}
}

\date{}

\maketitle

\begin{abstract}
We prove a sharp logarithmic Sobolev inequality which
holds for compact submanifolds without boundary in Riemannian manifold with nonnegative sectional curvature of arbitrary dimension and
codimension, while the ambient manifold needs to have a specific Euclid-like property. Like the Michael-Simon Sobolev inequality, this inequality
includes a term involving the mean curvature.  This extends a recent result of S. Brendle with Euclidean setting.
\end{abstract}

\section{Introduction}

In 2019, S. Brendle \cite{1} proved a Sobolev inequality which holds on submanifolds in Euclidean space of arbitrary dimension and codimension. The inequality is sharp if the codimension is at most 2. Soon, he \cite{2} proved a sharp logarithmic Sobolev inequality which holds on submanifolds in Euclidean space of arbitrary dimension and codimension at the same year. In 2020, he \cite{3} extended the result of the Sobolev inequality to Riemannnian manifolds with nonnegative curvature which gives the asymptotic volume ratio due to the Bishop-Gromov volume comparison theorem. Inspired by \cite{3}, we extend the result of the logarithmic Sobolev inequality to ambient Riemannian manifolds with nonnegative sectional curvature under an assumption.

Let $M$ be a complete noncompact Riemannian manifold of dimension $k$. We say $M$ satisfies the condition (P), if there is a point $p\in M$ such that the following limit exists and is positive.
$$
\lim_{r\rightarrow\infty}  \left((4\pi)^{-\frac{k}{2}} r^{-k}\displaystyle\int_{M}e^{-\frac{d(x,p)^{2}}{4r^{2}}}d\mathrm{vol}(x)  \right)  .
$$
We denotes the limit by $\theta$. Note that the limit is equal to 1 when $M=\mathbb{R}^{k}$. So we call it a specific Euclid-like property. We have the following result
\begin{theorem}	\label{th:01}
Let $M$ be a complete noncompact Riemannian manifold of dimension $n+m$ with nonnegative sectional curvature and satisfies the condition (P). Let $\Sigma$ be a compact $n$-dimension submanifold of $M$ without boundary, and let $f$ be a positive smooth function on $\Sigma$. Then
\begin{equation*}
\begin{array}{lllll}
  &\displaystyle\int_{\Sigma}f\left(\log f+n+\frac {n}{2}\log(4\pi)+\log \theta \right)d\mathrm{vol}-\int_{\Sigma}\frac {|\nabla^{\Sigma}f|^{2}}{f}d\mathrm{vol}-\int_{\Sigma}f|H|^{2}d\mathrm{vol}\\
  &\leq\displaystyle\left(\int_{\Sigma}f d\mathrm{vol}\right)\log {\left(\int_{\Sigma}f d\mathrm{vol}\right)},
\end{array}
\end{equation*}
where $H$ denotes the mean curvature vector of $\Sigma$.
\end{theorem}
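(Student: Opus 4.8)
The plan is to run the Alexandrov--Bakelman--Pucci (ABP) scheme of S.~Brendle, with affine functions replaced by the exponential map of $M$ and with the nonnegative sectional curvature entering through comparison geometry. Both sides of the claimed inequality transform in the same way under $f\mapsto\lambda f$ for $\lambda>0$, so I would first normalize $\int_\Sigma f\,d\mathrm{vol}=1$; then the right-hand side vanishes and it suffices to prove
$$\int_\Sigma f\Big(\log f+\tfrac n2\log(4\pi)+n+\log\theta\Big)\,d\mathrm{vol}-\int_\Sigma\frac{|\nabla^\Sigma f|^2}{f}\,d\mathrm{vol}\le\int_\Sigma f|H|^2\,d\mathrm{vol}.$$
Next I would solve on the closed manifold $\Sigma$ the weighted divergence equation $\mathrm{div}_\Sigma(f\,\nabla^\Sigma u)=f(\log f+n+\tfrac n2\log(4\pi))-\tfrac{|\nabla^\Sigma f|^2}{f}-\beta f$, where $\beta$ is the unique constant making the right-hand side integrate to zero, i.e. $\beta=\int_\Sigma f(\log f+n+\tfrac n2\log(4\pi))\,d\mathrm{vol}-\int_\Sigma\frac{|\nabla^\Sigma f|^2}{f}\,d\mathrm{vol}$. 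Since $f$ is smooth and strictly positive, elliptic regularity gives $u\in C^{2,\gamma}(\Sigma)$, and because $\int_\Sigma f\,d\mathrm{vol}=1$ the target reduces to the single scalar estimate $\beta+\log\theta\le\int_\Sigma f|H|^2\,d\mathrm{vol}$.

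The core is a transport/covering construction at a large scale $r$. For $x\in\Sigma$ and $y\in T_x^\perp\Sigma$ I would set $\Phi_r(x,y)=\exp_x\big(r(\nabla^\Sigma u(x)+y)\big)$, where $\nabla^\Sigma u(x)+y$ sweeps out $T_xM=T_x\Sigma\oplus T_x^\perp\Sigma$. Given a target $z\in M$, minimizing $x\mapsto u(x)+\tfrac1{2r}\,d(x,z)^2$ over the compact $\Sigma$ produces a point $\bar x$ whose first-order condition yields a preimage $z=\Phi_r(\bar x,\bar y)$ with $\bar y=\tfrac1r(\exp_{\bar x}^{-1}z)^\perp$, so that $\Phi_r$ (restricted to the contact set) covers $M$. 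The second-order condition at $\bar x$, together with the Hessian comparison $\nabla^2_M\big(\tfrac12 d(\cdot,z)^2\big)\le g$ valid under nonnegative sectional curvature, forces the tangential block $A=\nabla^2_\Sigma u-\langle\mathrm{II},y\rangle$ of the differential to be positive semidefinite up to an error that is $O(1/r)$.

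I would then apply the area formula with the Gaussian weight $e^{-d(\cdot,p)^2/4r^2}$. Nonnegative sectional curvature gives, via the Rauch/Jacobi field comparison, the one-sided bound $|\det D\exp|\le 1$, so that $|\det D\Phi_r|\le r^{n+m}\det A$; since $d(\Phi_r(x,y),p)^2/4r^2=\tfrac14(|\nabla^\Sigma u(x)|^2+|y|^2)+O(1/r)$, dividing by $r^{n+m}$ and letting $r\to\infty$ while invoking condition (P), which replaces $\int_M e^{-d(\cdot,p)^2/4r^2}\,d\mathrm{vol}$ by $\theta\,(4\pi r^2)^{(n+m)/2}(1+o(1))$, yields, with $\mathcal C=\{(x,y):x\in\Sigma,\ y\in T_x^\perp\Sigma,\ A\ge0\}$ the contact set,
$$\theta\,(4\pi)^{(n+m)/2}\le\int_{\mathcal C}e^{-(|\nabla^\Sigma u|^2+|y|^2)/4}\,\det A\;dy\,d\mathrm{vol}(x).$$
This is exactly the Euclidean inequality of Brendle carrying the extra factor $\theta$. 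From here the argument is as in the Euclidean setting: the arithmetic--geometric mean inequality gives $\det A\le\big(\tfrac1n(\Delta_\Sigma u-\langle H,y\rangle)\big)^n$ on $\mathcal C$, and integrating the Gaussian over the normal fibre $T_x^\perp\Sigma$ produces the factor $(4\pi)^{m/2}$ together with, after the careful normal estimate of Brendle, the sharp linear mean-curvature term.

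Finally I would substitute the equation for $u$ to write $\Delta_\Sigma u=\log f+n+\tfrac n2\log(4\pi)-\beta-\tfrac{|\nabla^\Sigma f|^2}{f^2}-\tfrac{\langle\nabla^\Sigma f,\nabla^\Sigma u\rangle}{f}$, and dispose of the remaining gradient terms using the identity $\tfrac14|\nabla^\Sigma u|^2+\tfrac{\langle\nabla^\Sigma f,\nabla^\Sigma u\rangle}{f}+\tfrac{|\nabla^\Sigma f|^2}{f^2}=\big|\tfrac12\nabla^\Sigma u+\tfrac{\nabla^\Sigma f}{f}\big|^2\ge 0$; this returns precisely the estimate $\beta+\log\theta\le\int_\Sigma f|H|^2\,d\mathrm{vol}$, completing the proof. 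I expect the main obstacle to be the curvature step: making the Jacobian comparison $|\det D\exp|\le1$ and the distance-Hessian comparison uniform enough to survive the limit $r\to\infty$, and matching the large-scale Gaussian mass of $M$ with condition (P) so that the deficit is captured exactly by $\theta$ while the mean-curvature contribution emerges in the sharp linear form $\int_\Sigma f|H|^2\,d\mathrm{vol}$ rather than an exponentiated surrogate.
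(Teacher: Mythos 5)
Your overall architecture—normalize, solve a weighted divergence equation, run the ABP/transport argument with $\Phi_r(x,y)=\exp_x(r\nabla^\Sigma u(x)+ry)$, cover $M$ from a contact set, bound the Jacobian using nonnegative sectional curvature, and integrate a Gaussian weight against condition (P) as $r\to\infty$—is exactly the paper's. But there is a genuine gap at the point you yourself flag as the main obstacle: the mean curvature term. You omit $-f|H|^2$ from the right-hand side of the auxiliary equation and hope the linear term $\int_\Sigma f|H|^2\,d\mathrm{vol}$ emerges at the end. It does not. With your equation, completing the squares in the trace bound on the contact set gives
$$\Delta_\Sigma u-\langle H,y\rangle\le \log f+n+\tfrac n2\log(4\pi)-\beta+\tfrac14\left(|\nabla^\Sigma u|^2+|y|^2\right)+|H|^2-\tfrac14|2H+y|^2,$$
so after cancelling the Gaussian weight $e^{-d(x,\Phi(x,y))^2/4r^2}=e^{-(|\nabla^\Sigma u|^2+|y|^2)/4}$ and integrating over the normal fibre you land on $\beta+\log\theta\le\log\int_\Sigma fe^{|H|^2}\,d\mathrm{vol}$. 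Since $\int_\Sigma f\,d\mathrm{vol}=1$, Jensen's inequality gives $\log\int_\Sigma fe^{|H|^2}\,d\mathrm{vol}\ge\int_\Sigma f|H|^2\,d\mathrm{vol}$, i.e.\ the exponentiated surrogate sits on the wrong side, and your target $\beta+\log\theta\le\int_\Sigma f|H|^2\,d\mathrm{vol}$ does not follow. The paper (following Brendle) avoids this entirely by putting $-f|H|^2$ into the source term, $\mathrm{div}_\Sigma(f\nabla^\Sigma u)=f\log f-\frac{|\nabla^\Sigma f|^2}{f}-f|H|^2$, with the scaling normalization making this integrate to zero; then $-|H|^2-\langle H,y\rangle=\frac{|y|^2}{4}-\frac{|2H+y|^2}{4}$ absorbs the mean curvature exactly and linearly, with no Jensen step. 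The same fix is available in your normalization: include $-f|H|^2$ in the equation and adjust $\beta$ accordingly.

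Two smaller corrections. First, the elementary inequality you need for the determinant is $\det\left(\tfrac gr+D_\Sigma^2u-\langle \mathrm{II},y\rangle\right)\le e^{\frac nr+\Delta_\Sigma u-\langle H,y\rangle-n}$, coming from $\lambda\le e^{\lambda-1}$, not the arithmetic--geometric mean bound $\left(\tfrac1n\operatorname{tr}\right)^n$ that you cite from the Sobolev (rather than log-Sobolev) argument; the exponential is what converts the $\log f$ inside the trace into the factor $f(x)$ under the integral, and the power form does not. Second, the weighted equation is solvable only component by component, so you still need the paper's reduction to connected $\Sigma$ via $a\log a+b\log b<(a+b)\log(a+b)$; and the base point of the Gaussian weight on $M$ is $x(p)$, which varies with $p$ over the compact $\Sigma$, so you must invoke the uniform version of condition (P) established in Proposition 2.1 rather than the fixed-base-point limit.
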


Recall the paraboloid of revolution $\Gamma:\overrightarrow{r}(u,v)=(u\cos v,u\sin v,\frac{1}{2}au^{2})$ with $u\in[0,\infty)$ and $v\in[0,2\pi)$, where $a$ is a positive constant. We find that it also satisfies the condition (P). Moreover $\theta =2$ which is independent on $a$. It's amazing! And we have the result
\begin{corollary}\label{th:02}
Let $\gamma$ is a smooth closed curve on $\Gamma$, and let $f$ be a positive smooth function on $\gamma$. Then
\begin{equation*}
\begin{array}{lllll}
  &\displaystyle\int_{\gamma}f\left(\log f+1+\frac {1}{2}\log(4\pi)+\log 2 \right)d\mathrm{vol}-\int_{\gamma}\frac {|\nabla^{\gamma}f|^{2}}{f}d\mathrm{vol}-\int_{\gamma}f|\kappa_{n}|^{2}d\mathrm{vol}\\
  &\leq\displaystyle\left(\int_{\gamma}f d\mathrm{vol}\right)\log {\left(\int_{\gamma}f d\mathrm{vol}\right)},
\end{array}
\end{equation*}
where $\kappa_{n}$ denotes the geodesic normal curvature of $\gamma$ with respect to $\Gamma$.
\end{corollary}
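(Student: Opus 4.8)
The plan is to obtain the corollary as the special case of Theorem \ref{th:01} in which the ambient manifold is the paraboloid itself. I take $M=\Gamma$, so that $\dim M=2$, and let $\Sigma=\gamma$ be the given closed curve, so that $n=1$ and $m=1$. Substituting $n=1$ and the value $\theta=2$ into the inequality of Theorem \ref{th:01} reproduces verbatim the inequality claimed in the corollary, once the term $|H|^2$ is identified with $|\kappa_n|^2$. Hence the whole proof reduces to checking that $\Gamma$ satisfies the two hypotheses of the theorem—nonnegative sectional curvature and condition (P) with constant $\theta=2$—together with this identification of the curvature term.

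The curvature checks are routine. From $\overrightarrow{r}(u,v)=(u\cos v,u\sin v,\tfrac12 au^2)$ one finds the first fundamental form $E=1+a^2u^2$, $F=0$, $G=u^2$, and a short computation of the second fundamental form gives the Gaussian curvature
\begin{equation*}
K=\frac{a^2}{(1+a^2u^2)^2}>0 .
\end{equation*}
As $\Gamma$ is a surface, its sectional curvature equals $K$, so it is positive and the nonnegativity hypothesis holds. For the curvature term I would view $\gamma$ as a one-dimensional submanifold of the Riemannian manifold $\Gamma$: its mean curvature vector $H$ is the component of the curvature vector of $\gamma$ along the line normal to $\gamma$ inside $T\Gamma$, whose length is exactly the geodesic (normal) curvature $|\kappa_n|$. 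This yields $|H|^2=|\kappa_n|^2$, matching the two statements.

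The substantive step, and the one I expect to be the main obstacle, is the verification of condition (P) and the evaluation of $\theta$. I would take $p$ to be the vertex $u=0$. Since $\Gamma$ is a surface of revolution, the meridians $v=\mathrm{const}$ are minimizing geodesics from the vertex, so the distance function is the radial arclength
\begin{equation*}
d(x,p)=\rho(u)=\int_0^u\sqrt{1+a^2s^2}\,ds ,
\end{equation*}
while the area element is $d\mathrm{vol}=u\sqrt{1+a^2u^2}\,du\,dv$. Writing the integral in condition (P) in these coordinates and using $\rho'(u)=\sqrt{1+a^2u^2}$ to pass to the variable $\rho$ gives
\begin{equation*}
(4\pi)^{-1}r^{-2}\int_{\Gamma}e^{-d(x,p)^2/(4r^2)}\,d\mathrm{vol}
=\frac{1}{2r^2}\int_0^\infty e^{-\rho^2/(4r^2)}\,u(\rho)\,d\rho ,
\end{equation*}
where $u(\rho)$ is the inverse of $\rho(u)$. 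It then remains to analyze the right-hand side as $r\to\infty$, for instance by the rescaling $\rho=2r\sigma$ together with the large-$\rho$ behavior of $u(\rho)$ obtained by inverting $\rho(u)$; this is the delicate part of the argument and is precisely where the value $\theta=2$ must be extracted. Once the limit has been shown to exist, be positive and equal $2$ (in particular independent of $a$), condition (P) is verified and Theorem \ref{th:01} applies directly to give the stated inequality.
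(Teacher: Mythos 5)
Your reduction of the corollary to Theorem 1.1 (taking $M=\Gamma$, $\Sigma=\gamma$, $n=m=1$, computing $K=a^{2}/(1+a^{2}u^{2})^{2}>0$, and identifying $|H|$ with $|\kappa_{n}|$) is exactly the intended route, and those parts are fine. The genuine gap is that you stop precisely where all the content of the paper's proof lies: you never verify condition (P) nor extract the value $\theta=2$; you only rewrite the quantity in condition (P) as $\frac{1}{2r^{2}}\int_{0}^{\infty}e^{-\rho^{2}/(4r^{2})}u(\rho)\,d\rho$ and defer ``the delicate part.'' The paper's Section 4 consists entirely of that delicate part: it bounds the integral above by $4r^{2}$ via $u\leq 2A(u)$ and the substitution $t=A(u)^{2}$, and attempts a matching lower bound by restricting to $u\in[0,\delta]$, where it asserts $u\geq\frac{2}{1+\varepsilon}A(u)$.

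Worse, the step you defer would in fact fail if carried out. Completing your own rescaling $\rho=2r\sigma$ gives $\frac{1}{r}\int_{0}^{\infty}e^{-\sigma^{2}}u(2r\sigma)\,d\sigma$, and since $\rho(u)=\int_{0}^{u}\sqrt{1+a^{2}s^{2}}\,ds\sim\frac{a}{2}u^{2}$ forces $u(\rho)\sim\sqrt{2\rho/a}$ as $\rho\to\infty$ (while $u(\rho)\leq\rho$ everywhere), the whole expression is $O(r^{-1/2})$ and tends to $0$, not $2$; equivalently, the paraboloid has area growth $|B_{R}|\sim CR^{3/2}$, so its asymptotic area ratio vanishes. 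This also exposes where the paper's own evaluation goes wrong: near $u=0$ one has $A(u)=u+O(u^{3})$, not $A(u)\sim\frac{u}{2}$, so in fact $u\leq A(u)$ and the claimed inequality $u\geq\frac{2}{1+\varepsilon}A(u)$ is false; moreover $e^{-A(\delta)^{2}/(4r^{2})}\to 1$ as $r\to\infty$, so the paper's lower bound $\frac{4r^{2}}{1+\varepsilon}\left(1-e^{-A(\delta)^{2}/(4r^{2})}\right)$ stays bounded rather than being comparable to $4r^{2}$. The honest conclusion of the computation you set up is $\theta=0$, in which case $\Gamma$ does not satisfy condition (P) and Theorem 1.1 does not apply; so neither your sketch nor the paper's argument establishes the corollary as stated.
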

However, cylinder $S^{1}\times \mathbb{R}$ doesn't satisfies the condition (P) in spite of $\theta=0$.

The logarithmic Sobolev inequality has been studied by numerous authors (see e.g.\cite{6,7,8,9,10}). Our proof of theorem 1.1 is in the spirit of ABP-techniques in \cite{2}. ABP-techniques have been applied to various classes of linear and nonlinear elliptic equations in the Euclidean space for a long time. Due to some difficulties, it was not until 1997 that Cabr\'{e} \cite{4} developed them to Riemannnian manifolds.

\section{Preliminaries}

Let's talk about the condition (P) first.

\begin{proposition}\label{th:01}
  Let $M$ be a complete noncompact Riemannian manifold of dimension $k$, then the following are equivalent:
  \begin{itemize}
    \item[(a)] There exists a point $p\in M$ such that the limit
$$
\lim_{r\rightarrow\infty}  \left((4\pi)^{-\frac{k}{2}} r^{-k}\displaystyle\int_{M}e^{-\frac{d(x,p)^{2}}{4r^{2}}}d\mathrm{vol}(x)  \right)
$$
exists.
    \item[(b)] The limit
$$
\lim_{r\rightarrow\infty}  \left((4\pi)^{-\frac{k}{2}} r^{-k}\displaystyle\int_{M}e^{-\frac{d(x,p)^{2}}{4r^{2}}}d\mathrm{vol}(x)  \right)
$$
exists for every $p\in M$.
    \item[(c)] For any compact subset $K\subset M$ and for any Borel map $p:M\rightarrow K$, the limit
$$
\lim_{r\rightarrow\infty}  \left((4\pi)^{-\frac{k}{2}} r^{-k}\displaystyle\int_{M}e^{-\frac{d(x,p(x))^{2}}{4r^{2}}}d\mathrm{vol}(x)  \right)
$$
exists.
  \end{itemize}
  Moreover, both of these limits are the same one if exist.
\end{proposition}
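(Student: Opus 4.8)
The plan is to establish the single implication (a) $\Rightarrow$ (c) together with the assertion that all the limits coincide; the remaining implications are then immediate. Throughout write
\[
I_{p}(r)=(4\pi)^{-k/2}r^{-k}\int_M e^{-d(x,p)^2/(4r^2)}\,d\mathrm{vol}(x),
\]
and, for a Borel map $p(\cdot):M\to K$, let $I_{p(\cdot)}(r)$ denote the same expression with $p$ replaced by $p(x)$. Then (c) $\Rightarrow$ (b) follows by taking $K=\{p\}$ to be a single point, since the only Borel map into it is the constant map and its integral is exactly $I_p(r)$; and (b) $\Rightarrow$ (a) is trivial. Hence everything reduces to the following claim: if $\theta_0:=\lim_{r\to\infty}I_{p_0}(r)$ exists for one base point $p_0$, then for every compact $K$ and every Borel $p(\cdot):M\to K$ we have $\lim_{r\to\infty}I_{p(\cdot)}(r)=\theta_0$.

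The engine is a two-sided comparison of $I_{p(\cdot)}(r)$ with a rescaling of $I_{p_0}$. Since $K$ is compact, $D:=\sup_{q\in K}d(q,p_0)<\infty$, so for every $x$ with $d(x,p_0)\ge D$ the triangle inequality gives $\big(d(x,p_0)-D\big)^2\le d(x,p(x))^2\le\big(d(x,p_0)+D\big)^2$. Fixing $\epsilon\in(0,1)$ and using Young's inequality $2D\,d(x,p_0)\le\epsilon\,d(x,p_0)^2+D^2/\epsilon$ to expand both squares, this becomes
\[
(1-\epsilon)\,d(x,p_0)^2-\tfrac{D^2}{\epsilon}\ \le\ d(x,p(x))^2\ \le\ (1+\epsilon)\,d(x,p_0)^2+D^2\big(1+\tfrac1\epsilon\big).
\]
Exponentiating, and using the change of scale $(4\pi)^{-k/2}r^{-k}\int_M e^{-\lambda d(x,p_0)^2/(4r^2)}\,d\mathrm{vol}=\lambda^{-k/2}I_{p_0}(r/\sqrt\lambda)$ (valid for any $\lambda>0$, by $s=r/\sqrt\lambda$), together with the trivial bounds $0\le e^{-d(x,p(x))^2/(4r^2)}\le1$ on the bounded region $\{d(\cdot,p_0)<D\}$ of finite volume $V_D$, one obtains
\[
(1+\epsilon)^{-k/2}e^{-\frac{D^2(1+1/\epsilon)}{4r^2}}\,I_{p_0}^{\ge D}\!\Big(\tfrac{r}{\sqrt{1+\epsilon}}\Big)\ \le\ I_{p(\cdot)}(r)\ \le\ (1-\epsilon)^{-k/2}e^{\frac{D^2}{4\epsilon r^2}}\,I_{p_0}\!\Big(\tfrac{r}{\sqrt{1-\epsilon}}\Big)+\frac{V_D}{(4\pi)^{k/2}r^{k}},
\]
where $I_{p_0}^{\ge D}$ is the integral defining $I_{p_0}$ restricted to $\{d(\cdot,p_0)\ge D\}$.

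Now let $r\to\infty$. The scalar prefactors $e^{\pm D^2(\cdots)/r^2}\to1$, the remainder $V_D/((4\pi)^{k/2}r^{k})\to0$, and since $r/\sqrt{1\pm\epsilon}\to\infty$ both $I_{p_0}(r/\sqrt{1\pm\epsilon})$ and $I_{p_0}^{\ge D}(r/\sqrt{1\pm\epsilon})$ tend to $\theta_0$ (the full and restricted integrals differ only by the vanishing near-region term). Therefore
\[
(1+\epsilon)^{-k/2}\theta_0\ \le\ \liminf_{r\to\infty}I_{p(\cdot)}(r)\ \le\ \limsup_{r\to\infty}I_{p(\cdot)}(r)\ \le\ (1-\epsilon)^{-k/2}\theta_0 .
\]
Letting $\epsilon\downarrow0$ squeezes both outer quantities to $\theta_0$, proving $\lim_{r\to\infty}I_{p(\cdot)}(r)=\theta_0$. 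This is (a) $\Rightarrow$ (c); and since every limit occurring in (b) and (c) has been shown to equal the single number $\theta_0$, all the limits agree.

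The one genuine subtlety is uniformity, i.e. that the estimate must accommodate the moving center $p(x)$ rather than a fixed one. This is exactly where compactness of $K$ is used: it bounds $d(p(x),p_0)\le D$ uniformly in $x$, so a single deterministic rescaling $r\mapsto r/\sqrt{1\pm\epsilon}$ dominates $I_{p(\cdot)}$ for all admissible maps at once; measurability of $p(\cdot)$ serves only to make $x\mapsto e^{-d(x,p(x))^2/(4r^2)}$ a Borel function, whose integrability (hence that of $I_{p(\cdot)}$) is inherited from the convergent Gaussian at $p_0$ through the same comparison. I expect the bookkeeping of the near-$p_0$ region and the two rescalings to be the only place requiring care.
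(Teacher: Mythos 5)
Your argument is correct and follows essentially the same route as the paper: reduce everything to (a)~$\Rightarrow$~(c), compare $d(x,p(x))$ with $d(x,p_0)$ via the triangle inequality and compactness of $K$, discard a bounded region whose contribution is $O(r^{-k})$, rescale $r$ to absorb the factor $(1\pm\epsilon)$ in the exponent, and squeeze as $\epsilon\downarrow 0$. The only (harmless) difference is technical: the paper works outside a ball of radius $\varepsilon^{-1}$ to get a purely multiplicative error $(1\pm C\varepsilon)^2$ in the exponent, whereas you work outside the fixed ball of radius $D$ and use Young's inequality to trade the additive cross term for a prefactor $e^{\pm D^2(\cdots)/r^2}\to 1$.
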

\begin{proof}
Clearly, $(b)\Rightarrow(a)$ and $(c)\Rightarrow(b)$ are trivial. It remains to show $(a)\Rightarrow(c)$. We assume that there exists a point $p_{0}\in M$ such that the limit
$$
\lim_{r\rightarrow\infty}  \left((4\pi)^{-\frac{k}{2}} r^{-k}\displaystyle\int_{M}e^{-\frac{d(x,p_{0})^{2}}{4r^{2}}}d\mathrm{vol}(x)  \right)
$$
exists and equals to $\theta$. Given a compact subset $K\subset M$ and a Borel map $p:M\rightarrow K$. We define a positive constant
$$
C:=\mathrm{sup}\{ d(p_{0},p(x)):x\in M \}.
$$
For any fixed $\varepsilon>0$ sufficiently small, note that
\begin{equation*}
\begin{array}{lllll}
-\frac{d(x,p(x))^{2}}{4r^{2}}&=-\frac{d(x,p_{0})^{2}}{4r^{2}}\cdot\frac{d(x,p(x))^{2}}{d(x,p_{0})^{2}}\\
&\geq-\frac{d(x,p_{0})^{2}}{4r^{2}}\left(1+\frac{d(p_{0},p(x))}{d(x,p_{0})}\right)^{2}\\
&\geq-\frac{d(x,p_{0})^{2}}{4r^{2}}\left(1+C\varepsilon\right)^{2}\\
\end{array}
\end{equation*}
for all $x\in M\setminus B_{\varepsilon^{-1}}(p_{0})$. Similarly, we have
$$
-\frac{d(x,p(x))^{2}}{4r^{2}}\leq-\frac{d(x,p_{0})^{2}}{4r^{2}}\left(1-C\varepsilon\right)^{2}
$$
for all $x\in M\setminus B_{\varepsilon^{-1}}(p_{0})$. Thus,
\begin{equation*}
\begin{array}{lllll}
r^{-k}\displaystyle\int_{M\setminus B_{\varepsilon^{-1}}(p_{0})}e^{-\frac{d(x,p_{0})^{2}}{4r^{2}}\left(1+C\varepsilon\right)^{2}}d\mathrm{vol}(x)&\leq r^{-k}\displaystyle\int_{M\setminus B_{\varepsilon^{-1}}(p_{0})}e^{-\frac{d(x,p(x))^{2}}{4r^{2}}}d\mathrm{vol}(x)\\
&\leq r^{-k}\displaystyle\int_{M\setminus B_{\varepsilon^{-1}}(p_{0})}e^{-\frac{d(x,p_{0})^{2}}{4r^{2}}\left(1-C\varepsilon\right)^{2}}d\mathrm{vol}(x).
\end{array}
\end{equation*}
It's easy to see that
$$
\lim_{r\rightarrow\infty}  \left((4\pi)^{-\frac{k}{2}} r^{-k}\displaystyle\int_{B_{\varepsilon^{-1}}(p_{0})}e^{-\frac{d(x,p_{0})^{2}}{4r^{2}}}d\mathrm{vol}(x)  \right)=0
$$
and
$$
\lim_{r\rightarrow\infty}  \left((4\pi)^{-\frac{k}{2}} r^{-k}\displaystyle\int_{B_{\varepsilon^{-1}}(p_{0})}e^{-\frac{d(x,p(x))^{2}}{4r^{2}}}d\mathrm{vol}(x)  \right)=0.
$$
From the assumption, we have
$$
\lim_{r\rightarrow\infty}  \left((4\pi)^{-\frac{k}{2}} r^{-k}\displaystyle\int_{M\setminus B_{\varepsilon^{-1}}(p_{0})}e^{-\frac{d(x,p_{0})^{2}}{4r^{2}}\left(1+C\varepsilon\right)^{2}}d\mathrm{vol}(x)  \right)=\left(1+C\varepsilon\right)^{-k}\theta
$$
and
$$
\lim_{r\rightarrow\infty}  \left((4\pi)^{-\frac{k}{2}} r^{-k}\displaystyle\int_{M\setminus B_{\varepsilon^{-1}}(p_{0})}e^{-\frac{d(x,p_{0})^{2}}{4r^{2}}\left(1-C\varepsilon\right)^{2}}d\mathrm{vol}(x)  \right)=\left(1-C\varepsilon\right)^{-k}\theta.
$$
Combining with
\begin{equation*}
\begin{array}{lllll}
&(4\pi)^{-\frac{k}{2}}r^{-k}\displaystyle\int_{M\setminus B_{\varepsilon^{-1}}(p_{0})}e^{-\frac{d(x,p_{0})^{2}}{4r^{2}}\left(1+C\varepsilon\right)^{2}}d\mathrm{vol}(x)+(4\pi)^{-\frac{k}{2}} r^{-k}\displaystyle\int_{B_{\varepsilon^{-1}}(p_{0})}e^{-\frac{d(x,p(x))^{2}}{4r^{2}}}d\mathrm{vol}(x)\\
&\leq (4\pi)^{-\frac{k}{2}} r^{-k}\displaystyle\int_{M}e^{-\frac{d(x,p(x))^{2}}{4r^{2}}}d\mathrm{vol}(x)  \\
&\leq(4\pi)^{-\frac{k}{2}}r^{-k}\displaystyle\int_{M\setminus B_{\varepsilon^{-1}}(p_{0})}e^{-\frac{d(x,p_{0})^{2}}{4r^{2}}\left(1-C\varepsilon\right)^{2}}d\mathrm{vol}(x)+(4\pi)^{-\frac{k}{2}} r^{-k}\displaystyle\int_{B_{\varepsilon^{-1}}(p_{0})}e^{-\frac{d(x,p(x))^{2}}{4r^{2}}}d\mathrm{vol}(x),
\end{array}
\end{equation*}
(c) follows from a standard $\varepsilon-\delta$ discussion.

\end{proof}

\section{Proof of Theorem 1.1}

Recall the definition of the second fundamental form $\Rmnum{2}$ of $\Sigma$ with respect to $M$:
$$
\langle\Rmnum{2}(X,Y),V\rangle=\langle\ \bar{D}_{X}Y,V\rangle=-\langle\ \bar{D}_{X}V,Y\rangle,
$$
where $X,Y$ are tangent vector fields, $V$ is a normal vector field and $\bar{D}$ denotes the connection on $M$. Moreover, the mean curvature vector $H$ is defined as the trace of the second fundamental form $\Rmnum{2}$.

We now give the proof of Theorem 1.1. We first consider the special case that $\Sigma$ is connected. By scaling, we may assume that
\begin{equation*}
  \int_{\Sigma}f\log f d\mathrm{vol}-\int_{\Sigma}\frac {|\nabla^{\Sigma}f|^{2}}{f}d\mathrm{vol}-\int_{\Sigma}f|H|^{2}d\mathrm{vol}=0.
\end{equation*}
From functional analysis and standard elliptic theory, we can find a smooth function $u:\Sigma\rightarrow\mathbb{R}$ such that
\begin{equation*}
  \mathrm{div}_{\Sigma}\left(f\nabla^{\Sigma}u\right)=f\log f-\frac {|\nabla^{\Sigma}f|^{2}}{f}-f|H|^{2}.
\end{equation*}
In the following, we fix a positive number $r$. We denote the contact set
\begin{equation*}
  A=\left\{(\bar {x},\bar {y})\in T^{\perp}{\Sigma}:ru(x)+\frac{1}{2}d\left(x,\mathrm{exp}_{\bar {x}}\left(r\nabla^{\Sigma}u(\bar{x})+r\bar{y}\right)\right)^{2}\geq ru(\bar{x})+\frac{1}{2}r^{2}\left(|\nabla^{\Sigma}u(\bar{x})|^{2}+|\bar{y}|^{2}\right),\forall x\in \Sigma\right\}.
\end{equation*}
Moreover, we define a map $\Phi:T^{\perp}\Sigma\rightarrow M$ by
\begin{equation*}
  \Phi(x,y)=\mathrm{exp}_{x}\left(r\nabla^{\Sigma}u(x)+ry\right)
\end{equation*}
for all $(x,y)\in T^{\perp}\Sigma$.

\begin{lemma}	Suppose that $(\bar{x},\bar{y})\in A$, then
\begin{equation*}
  d\left(\bar{x},\Phi (\bar{x},\bar{y})\right)^{2}=r^{2}\left(|\nabla^{\Sigma}u(\bar{x})|^{2}+|\bar{y}|^{2}\right).
\end{equation*}
\end{lemma}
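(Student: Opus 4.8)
The plan is to establish the two opposite inequalities
$d\left(\bar{x},\Phi(\bar{x},\bar{y})\right)^{2}\geq r^{2}\left(|\nabla^{\Sigma}u(\bar{x})|^{2}+|\bar{y}|^{2}\right)$
and
$d\left(\bar{x},\Phi(\bar{x},\bar{y})\right)^{2}\leq r^{2}\left(|\nabla^{\Sigma}u(\bar{x})|^{2}+|\bar{y}|^{2}\right)$
and then combine them. Each direction uses a different half of the setup: the lower bound comes purely from membership in the contact set $A$, while the upper bound is a general property of the exponential map.

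For the lower bound, I would exploit the defining inequality of $A$ by testing it against the single admissible competitor $x=\bar{x}$. Since $(\bar{x},\bar{y})\in A$, the inequality in the definition of $A$ holds for every $x\in\Sigma$; specializing to $x=\bar{x}$ and recalling that $\Phi(\bar{x},\bar{y})=\mathrm{exp}_{\bar{x}}\left(r\nabla^{\Sigma}u(\bar{x})+r\bar{y}\right)$, the two terms $ru(\bar{x})$ cancel and one is left with exactly $\tfrac{1}{2}d\left(\bar{x},\Phi(\bar{x},\bar{y})\right)^{2}\geq\tfrac{1}{2}r^{2}\left(|\nabla^{\Sigma}u(\bar{x})|^{2}+|\bar{y}|^{2}\right)$, which is the desired lower bound after multiplying by $2$.

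For the upper bound, I would use that $\Phi(\bar{x},\bar{y})$ is, by construction, the time-one endpoint of the geodesic $t\mapsto\mathrm{exp}_{\bar{x}}\bigl(t(r\nabla^{\Sigma}u(\bar{x})+r\bar{y})\bigr)$ issuing from $\bar{x}$. The Riemannian distance from $\bar{x}$ to $\Phi(\bar{x},\bar{y})$ is therefore at most the length of this particular geodesic segment, namely $\left|r\nabla^{\Sigma}u(\bar{x})+r\bar{y}\right|$. The one structural point requiring care is that $\nabla^{\Sigma}u(\bar{x})\in T_{\bar{x}}\Sigma$ whereas $\bar{y}\in T^{\perp}_{\bar{x}}\Sigma$, so these two vectors are orthogonal in $T_{\bar{x}}M$; the Pythagorean identity then gives $\left|r\nabla^{\Sigma}u(\bar{x})+r\bar{y}\right|^{2}=r^{2}\left(|\nabla^{\Sigma}u(\bar{x})|^{2}+|\bar{y}|^{2}\right)$, and the upper bound follows at once.

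Combining the two bounds yields the stated equality. I do not expect any genuine obstacle: this lemma is essentially a consistency check built into the design of the contact set, and the only substantive ingredient is the orthogonal splitting of $T_{\bar{x}}M$ into its tangential and normal parts, which ensures the cross term vanishes in the length computation and makes the geodesic realizing $\Phi(\bar{x},\bar{y})$ length-minimizing for points of $A$.
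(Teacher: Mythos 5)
Your proposal is correct and follows essentially the same route as the paper: the lower bound by specializing the defining inequality of $A$ to $x=\bar{x}$, and the upper bound by comparing $d\left(\bar{x},\Phi(\bar{x},\bar{y})\right)$ with the length of the geodesic $t\mapsto\mathrm{exp}_{\bar{x}}\left(rt\nabla^{\Sigma}u(\bar{x})+rt\bar{y}\right)$, whose initial speed squared is $r^{2}\left(|\nabla^{\Sigma}u(\bar{x})|^{2}+|\bar{y}|^{2}\right)$ by the orthogonality of the tangential and normal components. You even make explicit the Pythagorean step that the paper leaves implicit.
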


\begin{proof}
Let $\bar {\gamma}(t):=\mathrm{exp}_{\bar {x}}\left(rt\nabla^{\Sigma}u(\bar{x})+rt\bar{y}\right)$ for $t\in[0,1]$. From the definition of $A$, we have
\begin{equation*}
  ru(\bar{x})+\frac{1}{2}d\left(\bar{x},\mathrm{exp}_{\bar {x}}\left(r\nabla^{\Sigma}u(\bar{x})+r\bar{y}\right)\right)^{2}\geq ru(\bar{x})+\frac{1}{2}r^{2}\left(|\nabla^{\Sigma}u(\bar{x})|^{2}+|\bar{y}|^{2}\right).
\end{equation*}

Thus, $ d\left(\bar{x},\Phi (\bar{x},\bar{y})\right)^{2}\geq r^{2}\left(|\nabla^{\Sigma}u(\bar{x})|^{2}+|\bar{y}|^{2}\right).$
On the other hand,
\begin{equation*}
\begin{array}{lllll}
r^{2}\left(|\nabla^{\Sigma}u(\bar{x})|^{2}+|\bar{y}|^{2}\right)&=|\bar{\gamma}'(0)|^{2}\\
&=\left(\displaystyle\int_0^{1}|\bar{\gamma}'(t)|dt\right)^{2}\\
&\geq d\left(\bar{x},\Phi (\bar{x},\bar{y})\right)^{2}.
\end{array}
\end{equation*}
Then, the lemma follows.
\end{proof}

\begin{lemma}	$\Phi(A)=M$.
\end{lemma}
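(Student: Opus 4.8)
The plan is to prove the two inclusions separately; since $\Phi(A)\subseteq M$ is automatic, the whole content is the reverse inclusion $M\subseteq\Phi(A)$. So I fix an arbitrary point $q\in M$ and aim to produce a pair $(\bar{x},\bar{y})\in A$ with $\Phi(\bar{x},\bar{y})=q$. The natural device is to minimize the function $\phi(x)=ru(x)+\frac{1}{2}d(x,q)^{2}$ over $\Sigma$. Because $\Sigma$ is compact and $\phi$ is continuous, $\phi$ attains its minimum at some point $\bar{x}\in\Sigma$. By Hopf--Rinow, using completeness of $M$, there is a minimizing geodesic from $\bar{x}$ to $q$; let $v\in T_{\bar{x}}M$ be its initial velocity, so that $\exp_{\bar{x}}(v)=q$ and $|v|=d(\bar{x},q)$.

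Next I would extract the first-order information at $\bar{x}$. Decompose $v=v^{T}+v^{\perp}$ into its tangential and normal parts relative to $\Sigma$. Recalling that the gradient of $x\mapsto\frac{1}{2}d(x,q)^{2}$ is $-v$, namely minus the initial velocity of the minimizing geodesic to $q$, the stationarity of $\phi$ at its minimum yields, after tangential projection, $r\nabla^{\Sigma}u(\bar{x})=v^{T}$. I then set $\bar{y}:=\frac{1}{r}v^{\perp}\in T^{\perp}_{\bar{x}}\Sigma$, so that $r\nabla^{\Sigma}u(\bar{x})+r\bar{y}=v$ and hence $\Phi(\bar{x},\bar{y})=\exp_{\bar{x}}(v)=q$. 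It remains to check $(\bar{x},\bar{y})\in A$. Since $v^{T}$ and $v^{\perp}$ are orthogonal, $d(\bar{x},q)^{2}=|v|^{2}=r^{2}(|\nabla^{\Sigma}u(\bar{x})|^{2}+|\bar{y}|^{2})$, which is exactly the identity of the preceding lemma. The minimizing property $ru(x)+\frac{1}{2}d(x,q)^{2}\ge ru(\bar{x})+\frac{1}{2}d(\bar{x},q)^{2}$ for all $x\in\Sigma$, combined with $q=\Phi(\bar{x},\bar{y})$ and the previous identity, is precisely the inequality defining $A$. Thus $(\bar{x},\bar{y})\in A$ and $q\in\Phi(A)$.

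The main obstacle is regularity: $\frac{1}{2}d(\cdot,q)^{2}$ need not be differentiable at $\bar{x}$ when $\bar{x}$ lies in the cut locus of $q$, so the first-variation computation above is not immediately justified. I would resolve this using the standard fact that the squared distance function is semiconcave. At the minimum of $\phi$, the smooth part $ru$ forces the semiconcave function $w:=\frac{1}{2}d(\cdot,q)^{2}|_{\Sigma}$ to admit $-r\nabla^{\Sigma}u(\bar{x})$ as a supporting (sub)gradient from below; a semiconcave function that is supported from below at a point is automatically differentiable there, so $w$ is differentiable at $\bar{x}$ with $\nabla^{\Sigma}w(\bar{x})=-r\nabla^{\Sigma}u(\bar{x})$. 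Since the tangential projection of the initial velocity of every minimizing geodesic is a supergradient of $w$ at $\bar{x}$, differentiability forces all of them to coincide with $r\nabla^{\Sigma}u(\bar{x})$, which legitimizes the choice of $v$ in the argument above. Note that only compactness of $\Sigma$ and completeness of $M$ are used here; the nonnegativity of the sectional curvature does not enter this surjectivity statement but will be invoked later, in the Jacobian estimate for $\Phi$.
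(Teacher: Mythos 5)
Your argument is correct and follows the same strategy as the paper's proof: minimize $x\mapsto ru(x)+\frac{1}{2}d(x,q)^{2}$ over the compact $\Sigma$, take a minimizing geodesic from the minimizer $\bar{x}$ to $q$, split its initial velocity $v$ into tangential and normal parts to define $\bar{y}$, and deduce $(\bar{x},\bar{y})\in A$ from the minimality together with $d(\bar{x},q)^{2}=|v|^{2}=r^{2}\left(|\nabla^{\Sigma}u(\bar{x})|^{2}+|\bar{y}|^{2}\right)$. The only real divergence is how the first-order condition $v^{T}=r\nabla^{\Sigma}u(\bar{x})$ is justified: you differentiate $\frac{1}{2}d(\cdot,q)^{2}$ directly and repair the possible non-differentiability at the cut locus via semiconcavity, whereas the paper recasts the minimization as one of the functional $\gamma\mapsto ru(\gamma(0))+E(\gamma)$ over paths from $\Sigma$ to $q$ and applies the first variation formula for the energy to the smooth minimizing geodesic, which never requires the distance function itself to be differentiable. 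Both routes are sound; the paper's is shorter because the energy functional is smooth along the comparison variation, while yours is more explicit about what happens at the cut locus, and you are right that neither curvature nor the condition (P) enters this surjectivity statement.
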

\begin{proof}
Fix a point $p\in M$. Since $\Sigma$ is compact without boundary, the function $x\mapsto ru(x)+\frac{1}{2}d(x,p)^{2}$ must attain its minimum at some point denoted by $\bar{x}$ on $\Sigma$. Moreover, we can find a minimizing geodesic $\bar{\gamma} :[0,1]\rightarrow M$  such that $\bar{\gamma}(0)=\bar{x}$ and $\bar{\gamma}(1)=p$. For every path $\gamma :[0,1]\rightarrow M$ satisfying $\gamma (0)\in \Sigma$ and $\gamma(1)=p$, we obtain
\begin{equation*}
\begin{array}{lllll}
ru(\gamma(0))+E(\gamma)&\geq ru(\gamma(0))+\frac {1}{2}d(\gamma(0),p)^{2}\\
&\geq ru(\bar{x})+\frac {1}{2}d(\bar{x},p)^{2}\\
&=ru(\bar{\gamma}(0))+\frac{1}{2}|\bar{\gamma}'(0)|^{2}\\
&=ru(\bar{\gamma}(0))+E(\bar{\gamma}),
\end{array}
\end{equation*}
where $E(\gamma)$ denotes the energy of $\gamma$. In other words, the path $\gamma$ minimizes the functional $ru(\gamma(0))+E(\gamma)$ among all paths $\bar{\gamma} :[0,1]\rightarrow M$ satisfying $\gamma (0)\in \Sigma$ and $\gamma(1)=p$. Hence, the formula for the first variation implies
\begin{equation*}
  \bar{\gamma}'(0)-r\nabla ^{\Sigma}u(\bar{x})\in T_{\bar{x}}^{\perp}\Sigma.
\end{equation*}
Consequently, we can find a vector $\bar{y}\in T_{\bar{x}}^{\perp}\Sigma$ such that
\begin{equation*}
  \bar{\gamma}'(0)=r\nabla ^{\Sigma}u(\bar{x})+r\bar{y}.
\end{equation*}
It remains to show $(\bar{x},\bar{y})\in A$. For each point $x\in \Sigma$, we have
\begin{equation*}
\begin{array}{lllll}
ru(x)+\frac{1}{2}d\left(x,\mathrm{exp}_{\bar {x}}\left(r\nabla^{\Sigma}u(\bar{x})+r\bar{y}\right)\right)^{2}&=ru(x)+\frac {1}{2}d(x,p)^{2}\\
&\geq ru(\bar{x})+\frac {1}{2}d(\bar{x},p)^{2}\\
&=ru(\bar{\gamma}(0))+\frac{1}{2}|\bar{\gamma}'(0)|^{2}\\
&=ru(\bar{x})+\frac{1}{2}r^{2}\left(|\nabla^{\Sigma}u(\bar{x})|^{2}+|\bar{y}|^{2}\right).
\end{array}
\end{equation*}
\end{proof}

\begin{lemma}	Suppose that $(\bar{x},\bar{y})\in A$, and let $\bar {\gamma}(t):=\mathrm{exp}_{\bar {x}}\left(rt\nabla^{\Sigma}u(\bar{x})+rt\bar{y}\right)$ for $t\in[0,1]$. If $Z$ is a vector field along $\bar{\gamma}$ satisfying $Z(0)\in T_{\bar{x}}\Sigma$ and $Z(1)=0$, then
\begin{equation*}
\begin{array}{lllll}
r(D_{\Sigma}^{2}u)(Z(0),Z(0))-r\langle \Rmnum{2}(Z(0),Z(0)),\bar{y}\rangle\\
+\displaystyle\int_{0}^{1}\left(|\bar{D}_{t}Z(t)|^{2}-\bar{R}(\bar{\gamma}'(t),Z(t),\bar{\gamma}'(t),Z(t)) \right)dt\geq 0.
\end{array}
\end{equation*}
\end{lemma}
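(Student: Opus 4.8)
The plan is to recognize the inequality as a second--variation statement at a constrained minimum. Set $p:=\Phi(\bar x,\bar y)=\exp_{\bar x}(r\nabla^{\Sigma}u(\bar x)+r\bar y)$. By Lemma 3.1, $d(\bar x,p)^{2}=r^{2}(|\nabla^{\Sigma}u(\bar x)|^{2}+|\bar y|^{2})$, so the defining condition of the contact set $A$ reads
\[ ru(x)+\tfrac12 d(x,p)^{2}\ \ge\ ru(\bar x)+\tfrac12 d(\bar x,p)^{2}\qquad\text{for all }x\in\Sigma, \]
that is, $\bar x$ is a minimum of $x\mapsto ru(x)+\tfrac12 d(x,p)^{2}$ over $\Sigma$. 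I will encode this minimality in a one--parameter family and differentiate twice.

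First I would pick a curve $\alpha:(-\varepsilon,\varepsilon)\to\Sigma$ with $\alpha(0)=\bar x$ and $\alpha'(0)=Z(0)$ (legitimate since $Z(0)\in T_{\bar x}\Sigma$), and then a smooth variation $\eta(s,t)$ of $\bar\gamma$ with $\eta(0,\cdot)=\bar\gamma$, $\partial_{s}\eta(0,\cdot)=Z$, $\eta(s,0)=\alpha(s)$ and $\eta(s,1)\equiv p$; the endpoint can be frozen precisely because $Z(1)=0$. Define the smooth function
\[ h(s):=ru(\eta(s,0))+E\big(\eta(s,\cdot)\big). \]
Since $\eta(s,\cdot)$ joins $\alpha(s)\in\Sigma$ to $p$, we have $E(\eta(s,\cdot))\ge\tfrac12 d(\alpha(s),p)^{2}$, while $\bar\gamma$ is minimizing so $E(\bar\gamma)=\tfrac12 d(\bar x,p)^{2}$. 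Together with the minimality of $\bar x$ this gives $h(s)\ge h(0)$ for all small $s$. Hence $s=0$ is an interior minimum of $h$ and $h''(0)\ge0$; computing $h''(0)$ will produce exactly the claimed expression.

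For the geodesic $\bar\gamma$ the second variation of energy is
\[ \frac{d^{2}}{ds^{2}}E(\eta(s,\cdot))\Big|_{s=0}=\int_{0}^{1}\!\Big(|\bar D_{t}Z|^{2}-\bar R(\bar\gamma',Z,\bar\gamma',Z)\Big)\,dt+\Big[\langle\bar D_{s}\partial_{s}\eta,\bar\gamma'\rangle\Big]_{t=0}^{t=1}, \]
and the $t=1$ boundary term vanishes because $\eta(s,1)\equiv p$. For the $t=0$ term I would invoke the Gauss formula $\bar D_{s}\alpha'(0)=D^{\Sigma}_{s}\alpha'(0)+\Rmnum{2}(Z(0),Z(0))$ and pair it with $\bar\gamma'(0)=r\nabla^{\Sigma}u(\bar x)+r\bar y$; since $Z(0),\nabla^{\Sigma}u(\bar x)$ are tangent and $\bar y,\Rmnum{2}(\cdot,\cdot)$ are normal, this term equals $-r\langle D^{\Sigma}_{s}\alpha'(0),\nabla^{\Sigma}u(\bar x)\rangle-r\langle\Rmnum{2}(Z(0),Z(0)),\bar y\rangle$. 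On the other hand, differentiating $s\mapsto ru(\alpha(s))$ twice yields $r(D_{\Sigma}^{2}u)(Z(0),Z(0))+r\langle\nabla^{\Sigma}u(\bar x),D^{\Sigma}_{s}\alpha'(0)\rangle$. Adding the two, the tangential acceleration terms cancel and
\[ 0\le h''(0)=r(D_{\Sigma}^{2}u)(Z(0),Z(0))-r\langle\Rmnum{2}(Z(0),Z(0)),\bar y\rangle+\int_{0}^{1}\!\Big(|\bar D_{t}Z|^{2}-\bar R(\bar\gamma',Z,\bar\gamma',Z)\Big)dt, \]
which is the asserted inequality.

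The delicate point is the boundary bookkeeping in the second variation of energy combined with the tangential/normal splitting of the acceleration of $\alpha$: the statement rests on the exact cancellation of the two tangential terms $\pm\,r\langle\nabla^{\Sigma}u(\bar x),D^{\Sigma}_{s}\alpha'(0)\rangle$, so one must use the definitions of the intrinsic Hessian $D_{\Sigma}^{2}u$ and of $\Rmnum{2}$ consistently and ensure the chosen variation is admissible ($\eta(s,0)\in\Sigma$, $\eta(s,1)=p$). It is worth noting that nonnegativity of the sectional curvature of $M$ plays no role in this lemma; it will only be used afterwards to control the index form $\int_{0}^{1}(|\bar D_{t}Z|^{2}-\bar R(\bar\gamma',Z,\bar\gamma',Z))\,dt$.
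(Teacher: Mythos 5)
Your argument is correct and coincides with the proof the paper defers to (Lemma 2.1 of Brendle's \cite{3}): interpret the contact-set condition as saying that $\bar\gamma$ minimizes $\gamma\mapsto ru(\gamma(0))+E(\gamma)$ among paths from $\Sigma$ to $p$, and apply the second variation formula, with the Gauss formula splitting the $t=0$ boundary term and the tangential acceleration terms cancelling. No gaps; this is the intended proof.
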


\begin{lemma}	Suppose that $(\bar{x},\bar{y})\in A$. Then $g+rD_{\Sigma}^{2}u(\bar{x})-r\langle \Rmnum{2}(\bar{x}),\bar{y}\rangle\geq 0$.
\end{lemma}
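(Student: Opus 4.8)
The plan is to reduce this pointwise matrix inequality to the variational estimate of Lemma 3.3 by inserting a well-chosen test vector field. Saying that $g+rD_{\Sigma}^{2}u(\bar{x})-r\langle \Rmnum{2}(\bar{x}),\bar{y}\rangle\geq 0$ is precisely the assertion that for every $w\in T_{\bar{x}}\Sigma$ one has
\[
|w|^{2}+r(D_{\Sigma}^{2}u)(w,w)-r\langle \Rmnum{2}(w,w),\bar{y}\rangle\geq 0 .
\]
So I would fix an arbitrary $w\in T_{\bar{x}}\Sigma$ and prove this scalar inequality; since $w$ is arbitrary, positive semidefiniteness follows.

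First I would recall the geodesic $\bar{\gamma}(t)=\mathrm{exp}_{\bar{x}}\left(rt\nabla^{\Sigma}u(\bar{x})+rt\bar{y}\right)$ from Lemma 3.3 and let $E(t)$ denote the parallel vector field along $\bar{\gamma}$ with $E(0)=w$. Because parallel transport is a linear isometry, $\bar{D}_{t}E=0$ and $|E(t)|=|w|$ for all $t$. Then I define the test field $Z(t):=(1-t)E(t)$, which meets exactly the hypotheses of Lemma 3.3: $Z(0)=E(0)=w\in T_{\bar{x}}\Sigma$ and $Z(1)=0$. The point of the factor $(1-t)$ is to kill the endpoint while keeping the covariant derivative as simple as possible.

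Next I would evaluate the index-form integrand on $Z$. From $\bar{D}_{t}Z=-E(t)+(1-t)\bar{D}_{t}E=-E(t)$ I get $|\bar{D}_{t}Z(t)|^{2}=|E(t)|^{2}=|w|^{2}$, a constant. For the curvature term, $\bar{R}(\bar{\gamma}'(t),Z(t),\bar{\gamma}'(t),Z(t))=(1-t)^{2}\,\bar{R}(\bar{\gamma}'(t),E(t),\bar{\gamma}'(t),E(t))\geq 0$, since the sectional-curvature expression $\bar{R}(X,Y,X,Y)$ is nonnegative for any pair of vectors when $M$ has nonnegative sectional curvature. Discarding this nonnegative curvature contribution yields the clean bound
\[
\int_{0}^{1}\left(|\bar{D}_{t}Z(t)|^{2}-\bar{R}(\bar{\gamma}'(t),Z(t),\bar{\gamma}'(t),Z(t))\right)dt\leq\int_{0}^{1}|w|^{2}\,dt=|w|^{2}.
\]

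Finally I would feed this $Z$ into the conclusion of Lemma 3.3, which reads $r(D_{\Sigma}^{2}u)(w,w)-r\langle \Rmnum{2}(w,w),\bar{y}\rangle+\int_{0}^{1}(\cdots)\,dt\geq 0$, and then replace the integral by its upper bound $|w|^{2}$ to obtain $|w|^{2}+r(D_{\Sigma}^{2}u)(w,w)-r\langle \Rmnum{2}(w,w),\bar{y}\rangle\geq 0$, as required. The only real idea in the argument — and the single place the geometric hypothesis is used — is the choice of the decaying parallel field $Z(t)=(1-t)E(t)$: it makes $|\bar{D}_{t}Z|^{2}$ identically $|w|^{2}$ so that the integral contributes no more than the ``$g$'' term, while nonnegativity of sectional curvature lets us throw away the $\bar{R}$ term for free. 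I do not anticipate a genuine obstacle beyond verifying these short computations; the work was already done in establishing Lemma 3.3.
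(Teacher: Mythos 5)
Your proof is correct and is essentially the argument the paper relies on: the paper omits this proof and refers to Lemma 2.2 of Brendle's paper \cite{3}, which proceeds exactly as you do, by testing the index-form inequality of the preceding lemma with the decaying parallel field $Z(t)=(1-t)E(t)$ and discarding the curvature term using nonnegative sectional curvature. All your computations ($\bar{D}_{t}Z=-E$, $|\bar{D}_{t}Z|^{2}=|w|^{2}$, and the reduction of positive semidefiniteness to the scalar inequality for arbitrary $w\in T_{\bar{x}}\Sigma$) check out.
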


\begin{lemma}	Suppose that $(\bar{x},\bar{y})\in A$, and let $\bar {\gamma}(t):=\mathrm{exp}_{\bar {x}}\left(rt\nabla^{\Sigma}u(\bar{x})+rt\bar{y}\right)$ for $t\in[0,1]$. Moreover, let $\{e_{1},...,e_{n}\}$ be an orthonormal basis of $T_{\bar{x}}\Sigma$. Suppose that $W$ is a Jacobi field along $\bar{\gamma}$ satisfying $W(0)\in T_{\bar{x}}\Sigma$ and $\langle \bar{D}_{t}W(0),e_{j}\rangle=r(D_{\Sigma}^{2}u)(W(0),e_{j})-r\langle \Rmnum{2}(W(0),e_{j}),\bar{y}\rangle$ for each $1\leq j\leq n$. If $W(\tau)=0$ for some $0<\tau<1$, then $W$ vanishes identically.
\end{lemma}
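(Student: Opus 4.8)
The plan is to use the index inequality of Lemma 3.3 (the second-variation inequality for admissible fields) as a minimum principle, applied to a broken comparison field built from $W$. Assume $W(\tau)=0$ for some $0<\tau<1$. I would first define the piecewise-smooth vector field
\[
Z(t)=\begin{cases} W(t), & t\in[0,\tau],\\[2pt] 0, & t\in[\tau,1].\end{cases}
\]
Then $Z(0)=W(0)\in T_{\bar{x}}\Sigma$ and $Z(1)=0$, so $Z$ is admissible for Lemma 3.3 (after noting that the index inequality there extends from smooth to piecewise-smooth, equivalently $H^1$, fields by a standard density argument). The goal is to show that this particular $Z$ makes the index form vanish, and then to exploit that it is a minimizer.

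Next I would evaluate the index form on $Z$. On $[\tau,1]$ the field is identically zero and contributes nothing. On $[0,\tau]$, since $W$ is a Jacobi field, the integrand is a total derivative: using $\bar{D}_{t}^{2}W+\bar{R}(W,\bar{\gamma}')\bar{\gamma}'=0$ together with the symmetries of $\bar{R}$, one checks that $|\bar{D}_{t}W|^{2}-\bar{R}(\bar{\gamma}',W,\bar{\gamma}',W)=\frac{d}{dt}\langle \bar{D}_{t}W,W\rangle$, so that
\[
\int_{0}^{\tau}\!\left(|\bar{D}_{t}W|^{2}-\bar{R}(\bar{\gamma}',W,\bar{\gamma}',W)\right)dt=\langle \bar{D}_{t}W(\tau),W(\tau)\rangle-\langle \bar{D}_{t}W(0),W(0)\rangle=-\langle \bar{D}_{t}W(0),W(0)\rangle,
\]
where $W(\tau)=0$ kills the upper endpoint. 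Now the prescribed boundary condition at $t=0$, together with the symmetry of $D_{\Sigma}^{2}u$ and of $\Rmnum{2}$ and the expansion $W(0)=\sum_{j}\langle W(0),e_{j}\rangle e_{j}$, gives $\langle \bar{D}_{t}W(0),W(0)\rangle=r(D_{\Sigma}^{2}u)(W(0),W(0))-r\langle \Rmnum{2}(W(0),W(0)),\bar{y}\rangle$. This is exactly the negative of the boundary terms appearing in the index form of Lemma 3.3, so the two contributions cancel and the total index of $Z$ is zero.

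Finally I would run the minimizer argument. By Lemma 3.3 the index form is nonnegative on all admissible fields, and we have just shown it equals zero on $Z$; hence $Z$ minimizes it, so the associated symmetric bilinear form satisfies $I(Z,V)=0$ for every admissible variation $V$. Choosing $V$ smooth with $V(0)=V(1)=0$ and $V(\tau)$ arbitrary, and integrating by parts separately on $[0,\tau]$ and $[\tau,1]$ (where $Z$ solves the Jacobi equation), all interior integrals drop out and one is left with $\langle \bar{D}_{t}W(\tau),V(\tau)\rangle=0$; since $V(\tau)$ is arbitrary this forces $\bar{D}_{t}W(\tau)=0$. Thus $W$ is a Jacobi field with $W(\tau)=\bar{D}_{t}W(\tau)=0$, and uniqueness for the linear second-order Jacobi ODE yields $W\equiv 0$. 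The main obstacle I anticipate is the step that turns vanishing index into the conclusion: one must justify carefully that a minimizing broken field cannot carry a genuine corner (equivalently, that the first variation of the index form vanishes in the stated Robin sense), and this is where the density of piecewise-smooth test fields and the exact matching between the $t=0$ boundary term of the index form and the prescribed boundary condition on $W$ are essential; the remaining curvature-symmetry bookkeeping in the total-derivative identity is routine.
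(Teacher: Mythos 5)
Your argument is correct and is essentially the proof the paper intends: the paper omits it by citing Lemma 2.3 of Brendle's manifold paper, whose proof is exactly this truncation-of-$W$ construction, the total-derivative identity from the Jacobi equation showing the index form of the broken field vanishes, and the first-variation/corner argument forcing $\bar{D}_{t}W(\tau)=0$ so that ODE uniqueness applies. The boundary terms cancel as you claim (the factor $r$ in the prescribed condition at $t=0$ matches the $r$ in the index form of Lemma 3.3), so no changes are needed.
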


\begin{lemma}	The Jacobian determinant of $\Phi$ satisfies
\begin{equation*}
  |\mathrm{det}D\Phi(x,y)|\leq r^{m}\mathrm{det}(g+rD_{\Sigma}^{2}u(x)-r\langle \Rmnum{2}(x),y\rangle)
\end{equation*}
for all $(x,y)\in A$.
\end{lemma}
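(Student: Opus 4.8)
The plan is to compute the differential of $\Phi$ at a contact point $(\bar x,\bar y)\in A$ in terms of Jacobi fields along the geodesic $\bar\gamma(t)=\exp_{\bar x}(rt\nabla^\Sigma u(\bar x)+rt\bar y)$, and then to dominate the resulting determinant by its flat counterpart using the nonnegativity of the sectional curvature. Fix an orthonormal basis $e_1,\dots,e_n$ of $T_{\bar x}\Sigma$ and $e_{n+1},\dots,e_{n+m}$ of $T_{\bar x}^\perp\Sigma$, and let $E_1(t),\dots,E_{n+m}(t)$ be the parallel orthonormal frame along $\bar\gamma$ with $E_a(0)=e_a$. Differentiating $\Phi$ in the horizontal directions $e_i$ (moving the foot point along $\Sigma$ while keeping $y$ parallel in the normal bundle) and in the vertical directions $e_{n+\alpha}$ (moving $y$ inside the fibre) produces, for each $a$, a Jacobi field $W_a$ along $\bar\gamma$ with $W_a(1)=D\Phi(e_a)$. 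Using the Gauss and Weingarten formulas one finds $W_i(0)=e_i$, $W_{n+\alpha}(0)=0$, together with $\langle\bar D_tW_i(0),e_j\rangle=r(D_\Sigma^2u)(e_i,e_j)-r\langle\Rmnum{2}(e_i,e_j),\bar y\rangle$ and $\bar D_tW_{n+\alpha}(0)=re_{n+\alpha}$. Writing $P(t)$ for the $(n+m)\times(n+m)$ matrix whose columns are the $W_a(t)$ expressed in the frame $E_b(t)$, the quantity to estimate is exactly $|\det D\Phi(\bar x,\bar y)|=|\det P(1)|$ (with respect to the natural volume form on $T^\perp\Sigma$), and $P$ solves the matrix Jacobi equation $P''(t)=-\mathcal R(t)P(t)$, where $\mathcal R(t)_{ab}=\bar R(\bar\gamma'(t),E_a(t),\bar\gamma'(t),E_b(t))$ is symmetric and satisfies $\mathcal R(t)\ge 0$ by nonnegative sectional curvature.

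The initial data are encoded by $P(0)=\mathrm{diag}(I_n,0)$ and $P'(0)=\left(\begin{smallmatrix}B&0\\ C&rI_m\end{smallmatrix}\right)$, where $B_{ij}=r[(D_\Sigma^2u)(e_i,e_j)-\langle\Rmnum{2}(e_i,e_j),\bar y\rangle]$ is symmetric and the off-diagonal block $C$ (from the normal part of $\bar D_tW_i(0)$) turns out to be irrelevant. Introduce the flat comparison matrix $P_0(t):=P(0)+tP'(0)$, the solution of the Jacobi equation with curvature set to zero and the same initial data; a block computation gives $\det P_0(1)=r^m\det(I_n+B)=r^m\det(g+rD_\Sigma^2u(\bar x)-r\langle\Rmnum{2}(\bar x),\bar y\rangle)$, which is precisely the right-hand side of the lemma and is nonnegative by Lemma 3.4. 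Since $I_n+B\ge 0$ one has $I_n+tB\ge(1-t)I_n>0$, so $P_0(t)$ is invertible on $[0,1)$; and by Lemma 3.5 no nontrivial Jacobi field of the above type vanishes at an interior time, so $P(t)$ is invertible on $(0,1)$ as well. Because $B$ is symmetric the Wronskian $P^TP'-P'^TP$ vanishes at $t=0$, hence identically, so both $S(t):=P'(t)P(t)^{-1}$ and $S_0(t):=P_0'(t)P_0(t)^{-1}$ are symmetric where defined, and the task reduces to proving $\det P(1)\le\det P_0(1)$.

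For this I would run a Riccati comparison driven by $\mathcal R\ge 0$. The matrices satisfy $S'=-\mathcal R-S^2$ and $S_0'=-S_0^2$, so $E:=S-S_0$ obeys $E'=-\mathcal R-SE-ES_0$. Conjugating by the flat solution, set $\mathcal F(t):=P_0(t)^TE(t)P_0(t)$; using $P_0'=S_0P_0$ and the symmetry of $S_0$ one obtains $\mathcal F'(t)=P_0(t)^T(-\mathcal R(t)-E(t)^2)P_0(t)\le 0$. Because $P$ and $P_0$ agree to second order at $t=0$ and share the same $t^{-1}$ singularity coming from the vanishing vertical fields, one checks $\mathcal F(t)\to 0$ as $t\to 0^+$; monotonicity then forces $\mathcal F\le 0$, i.e. $S(t)\le S_0(t)$ on $(0,1)$. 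Taking traces gives $\tfrac{d}{dt}\log\det P=\mathrm{tr}\,S\le\mathrm{tr}\,S_0=\tfrac{d}{dt}\log\det P_0$, and since $\log\det P(t)-\log\det P_0(t)\to 0$ as $t\to 0^+$ (the singular parts $m\log t+m\log r$ cancel), integration yields $\det P(1)\le\det P_0(1)$, whence $|\det D\Phi|\le r^m\det(g+rD_\Sigma^2u-r\langle\Rmnum{2},\bar y\rangle)$ after passing to the limit $t\to 1$.

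The genuinely delicate point, and the step I expect to be the main obstacle, is the behaviour at the degenerate endpoint $t=0$: the $m$ vertical Jacobi fields vanish there, so $P(0)$ is singular and both $S$ and $\log\det P$ blow up. Making the comparison rigorous requires the asymptotic expansion $P(t)=P(0)+tP'(0)+O(t^2)$ to confirm that the singular contributions of the curved and flat problems match exactly, so that $\mathcal F(0^+)=0$ and the log-determinants have cancelling singularities; it also relies on the symmetry of $S$, which rests on the symmetry of the Hessian/second-fundamental-form block $B$ and hence on the vanishing of the Wronskian. Once these two points are secured, the remaining ingredients — the identification of $D\Phi$ with $P(1)$, the evaluation of $\det P_0(1)$, and the sign $\mathcal R\ge 0$ from nonnegative curvature — are routine.
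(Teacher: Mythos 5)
Your argument is correct and follows essentially the same route as the proof the paper relies on: the paper omits the argument and cites Lemma 2.5 of Brendle's nonnegative-curvature Sobolev paper, which likewise encodes $D\Phi$ in the Jacobi-field matrix $P(t)$ with $P(0)=\diag(I_n,0)$ and $P'(0)$ containing the block $B$, uses the conserved Wronskian to get symmetry of $P'P^{-1}$, invokes the no-interior-vanishing lemma for invertibility on $(0,1)$, and compares with the flat solution $P_0(t)=P(0)+tP'(0)$ via a Riccati argument to get $\det P(1)\leq\det P_0(1)=r^{m}\det(g+rD_{\Sigma}^{2}u-r\langle \Rmnum{2},\bar y\rangle)$. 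Your conjugated quantity $\mathcal{F}=P_0^{T}(S-S_0)P_0$ is just a slightly different, equally standard, way of running that comparison, and your handling of the $t\to 0^{+}$ degeneracy and of the limiting case $\det(I_n+B)=0$ at $t=1$ matches what is needed there.
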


The proofs of Lemma 3.3-3.6 are identical to Lemma 2.1-2.3 and Lemma 2.5 in \cite{3} respectively. We omit them.

\begin{lemma}	The Jacobian determinant of $\Phi$ satisfies
\begin{equation*}
e^{-\frac{d(x,\Phi(x,y))^{2}}{4r^{2}}}  |\mathrm{det}D\Phi(x,y)|\leq r^{n+m}f(x)e^{\frac {n}{r}-n}e^{-\frac{|2H(x)+y|^{2}}{4}}
\end{equation*}
for all $(x,y)\in A$.
\end{lemma}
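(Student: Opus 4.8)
The plan is to combine the three preceding lemmas with one sharp scalar inequality and then feed in the defining PDE for $u$, so that the whole estimate collapses into a completion of a square. First I would use that $(x,y)\in A$ to pin down the Gaussian weight exactly: Lemma 3.1 gives $d(x,\Phi(x,y))^{2}=r^{2}(|\nabla^{\Sigma}u(x)|^{2}+|y|^{2})$, so that $e^{-d(x,\Phi(x,y))^{2}/(4r^{2})}=e^{-(|\nabla^{\Sigma}u|^{2}+|y|^{2})/4}$. Next, Lemma 3.6 controls the Jacobian by $|\mathrm{det}\,D\Phi(x,y)|\le r^{m}\,\mathrm{det}\,B$, where $B:=g+rD_{\Sigma}^{2}u(x)-r\langle \Rmnum{2}(x),y\rangle$, and Lemma 3.4 guarantees $B\ge 0$. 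Thus the whole problem reduces to estimating $\mathrm{det}\,B$ against the right-hand side.

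For the determinant the key is the elementary bound $\log s\le s-1$. Applying it to $s=\lambda/r$ for each eigenvalue $\lambda\ge 0$ of $B$ gives $\lambda\le r\,e^{\lambda/r-1}$ (with equality exactly at $\lambda=r$), so multiplying over the $n$ eigenvalues yields $\mathrm{det}\,B\le r^{n}e^{\mathrm{tr}(B)/r-n}$. Since $\mathrm{tr}(g)=n$, $\mathrm{tr}(D_{\Sigma}^{2}u)=\Delta_{\Sigma}u$ and $\mathrm{tr}\langle \Rmnum{2},y\rangle=\langle H,y\rangle$, we have $\mathrm{tr}(B)=n+r(\Delta_{\Sigma}u-\langle H,y\rangle)$, hence $\mathrm{det}\,B\le r^{n}e^{n/r-n}\,e^{\Delta_{\Sigma}u-\langle H,y\rangle}$. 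Restoring the factor $r^{m}$ and the Gaussian weight, it remains to show that the exponent $\Delta_{\Sigma}u-\langle H,y\rangle-\tfrac14|\nabla^{\Sigma}u|^{2}-\tfrac14|y|^{2}$ is at most $\log f-\tfrac14|2H+y|^{2}$.

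At this point I would substitute the PDE. Expanding $\mathrm{div}_{\Sigma}(f\nabla^{\Sigma}u)=f\Delta_{\Sigma}u+\langle\nabla^{\Sigma}f,\nabla^{\Sigma}u\rangle$ and dividing by $f$ turns the defining equation into $\Delta_{\Sigma}u=\log f-|\nabla^{\Sigma}\log f|^{2}-|H|^{2}-\langle\nabla^{\Sigma}\log f,\nabla^{\Sigma}u\rangle$. Inserting this, the terms carrying $f$ and $\nabla^{\Sigma}u$ assemble into a perfect square, namely $-|\nabla^{\Sigma}\log f|^{2}-\langle\nabla^{\Sigma}\log f,\nabla^{\Sigma}u\rangle-\tfrac14|\nabla^{\Sigma}u|^{2}=-|\nabla^{\Sigma}\log f+\tfrac12\nabla^{\Sigma}u|^{2}\le 0$, which I would simply discard; the remaining normal-direction terms combine as $-|H|^{2}-\langle H,y\rangle-\tfrac14|y|^{2}=-\tfrac14|2H+y|^{2}$, giving precisely the claimed inequality.

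The hard part is choosing the right spectral inequality. The naive estimate $\mathrm{det}\,B\le e^{\mathrm{tr}(B)-n}$ produces a coefficient $r$ in front of $\langle H,y\rangle$, which cannot be reconciled with the $-\tfrac14|2H+y|^{2}$ on the right; one must instead rescale by $r$ and use $\lambda\le r\,e^{\lambda/r-1}$, and it is exactly this choice that both manufactures the factor $r^{n}e^{n/r-n}$ and delivers the correct coefficient $1$ on the $\langle H,y\rangle$ term. Once that is in place, everything else is the completion of the square, which closes because the weight $\tfrac14|\nabla^{\Sigma}u|^{2}$ in Lemma 3.1 and the cross term $\langle\nabla^{\Sigma}\log f,\nabla^{\Sigma}u\rangle$ from the PDE are tuned to cancel.
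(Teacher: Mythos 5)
Your proposal is correct and follows essentially the same route as the paper: Lemma 3.4 and Lemma 3.6 plus the eigenvalue inequality $\lambda\leq e^{\lambda-1}$ applied after rescaling by $r$ give $|\mathrm{det}D\Phi|\leq r^{n+m}e^{\frac{n}{r}+\Delta_{\Sigma}u-\langle H,y\rangle-n}$, and the PDE for $u$ together with the completion of the square $-|\nabla^{\Sigma}\log f+\tfrac{1}{2}\nabla^{\Sigma}u|^{2}\leq 0$ and Lemma 3.1 yields the claim. The only cosmetic difference is that you move the Gaussian weight to the left at the outset, whereas the paper carries $e^{\frac{d(x,\Phi(x,y))^{2}}{4r^{2}}}$ on the right until the final line.
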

\begin{proof}
Given a point $(x,y)\in A$. Using the identity $\mathrm{div}_{\Sigma}\left(f\nabla^{\Sigma}u\right)=f\log f-\frac {|\nabla^{\Sigma}f|^{2}}{f}-f|H|^{2}$, we have
\begin{equation*}
\begin{array}{lllll}
\Delta _{\Sigma}u(x)-\langle H(x),y\rangle&=\log f(x)-\frac{|\nabla^{\Sigma}f(x)|^{2}}{f(x)^{2}}-|H(x)|^{2}\\
&-\frac{\langle\nabla^{\Sigma}f(x),\nabla^{\Sigma}u(x)\rangle}{f(x)}-\langle H(x),y\rangle\\
&=\log f(x)+\frac{|\nabla^{\Sigma}u(x)|^{2}+|y|^{2}}{4}\\
&-\frac{|2\nabla^{\Sigma}f(x)+f(x)\nabla^{\Sigma}u(x)|^{2}}{4f(x)^{2}}-\frac{|2H(x)+y|^{2}}{4}\\
&\leq\log f(x)+\frac{|\nabla^{\Sigma}u(x)|^{2}+|y|^{2}}{4}-\frac{|2H(x)+y|^{2}}{4}.
\end{array}
\end{equation*}
Using Lemma 3.4, Lemma 3.6 and the elementary inequality $\lambda\leq e^{\lambda-1}$, we have
\begin{equation*}
\begin{array}{lllll}
|\mathrm{det}D\Phi(x,y)|&\leq r^{m}\mathrm{det}(g+rD_{\Sigma}^{2}u(x)-r\langle \Rmnum{2}(x),y\rangle)\\
&=r^{n+m}\mathrm{det}(\frac{g}{r}+D_{\Sigma}^{2}u(x)-\langle \Rmnum{2}(x),y\rangle)\\
&\leq r^{n+m}e^{\frac{n}{r}+\Delta_{\Sigma}u(x)-\langle H(x),y\rangle-n}\\
&\leq r^{n+m}e^{\frac{n}{r}+\log f(x)+\frac{|\nabla^{\Sigma}u(x)|^{2}+|y|^{2}}{4}-\frac{|2H(x)+y|^{2}}{4}-n}\\
&=r^{n+m}f(x)e^{\frac {n}{r}-n}e^{-\frac{|2H(x)+y|^{2}}{4}}e^{\frac{d(x,\Phi(x,y))^{2}}{4r^{2}}}.
\end{array}
\end{equation*}
The lemma follows.
\end{proof}

By Lemma 3.2, for any fixed $p\in M$, we choose some point $(x(p),y(p))\in A$ arbitrarily such that $\Phi(x(p),y(p))=p$. Using Lemma 3.2, Lemma 3.7 and area formula \cite{5}, we have
\begin{equation*}
\begin{array}{lllll}
\displaystyle\int_{M}e^{-\frac{d(x(p),p)^{2}}{4r^{2}}}d\mathrm{vol}(p)&\leq\displaystyle\int_{M}\left(\int_{\{\Phi=p\}}e^{-\frac{d(x,\Phi(x,y))^{2}}{4r^{2}}}d\mathscr{H}^{0}\right)d\mathrm{vol}(p)\\
&=\displaystyle\int_{\Sigma}\left( \int_{T_{x}^{\perp}\Sigma}e^{-\frac{d(x,\Phi(x,y))^{2}}{4r^{2}}}|\mathrm{det}D\Phi(x,y)|1_{A}(x,y)dy   \right)d\mathrm{vol}(x)\\
&\leq\displaystyle\int_{\Sigma}\left( \int_{T_{x}^{\perp}\Sigma}r^{n+m}f(x)e^{\frac {n}{r}-n}e^{-\frac{|2H(x)+y|^{2}}{4}}1_{A}(x,y)dy   \right)d\mathrm{vol}(x)\\
&\leq\displaystyle\int_{\Sigma}\left( \int_{T_{x}^{\perp}\Sigma}r^{n+m}f(x)e^{\frac {n}{r}-n}e^{-\frac{|2H(x)+y|^{2}}{4}}dy   \right)d\mathrm{vol}(x)\\
&=r^{n+m}e^{\frac {n}{r}-n}(4\pi)^{\frac{m}{2}}\displaystyle\int_{\Sigma}f(x)d\mathrm{vol}(x),
\end{array}
\end{equation*}
where $\mathscr{H}^{0}$ denotes the counting measure. Using Proposition 2.1, we can divide by $r^{n+m}$ and send $r\rightarrow\infty$ since $M$ satisfies the condition (P). This gives
\begin{equation*}
  (4\pi)^{\frac{n+m}{2}}\theta\leq e^{-n}(4\pi)^{\frac{m}{2}}\int_{\Sigma}f(x)d\mathrm{vol}(x).
\end{equation*}
Consequently,
\begin{equation*}
  n+\frac {n}{2}\log(4\pi)+\log \theta\leq\log {\left(\int_{\Sigma}f d\mathrm{vol}\right)}.
\end{equation*}
Combining this inequality with the normalization
\begin{equation*}
  \int_{\Sigma}f\log f d\mathrm{vol}-\int_{\Sigma}\frac {|\nabla^{\Sigma}f|^{2}}{f}d\mathrm{vol}-\int_{\Sigma}f|H|^{2}d\mathrm{vol}=0
\end{equation*}
gives
\begin{equation*}
\begin{array}{lllll}
&\displaystyle\int_{\Sigma}f\left(\log f+n+\frac {n}{2}\log(4\pi)+\log \theta \right)d\mathrm{vol}-\int_{\Sigma}\frac {|\nabla^{\Sigma}f|^{2}}{f}d\mathrm{vol}-\int_{\Sigma}f|H|^{2}d\mathrm{vol}\\
&=\displaystyle\int_{\Sigma}f\left(n+\frac {n}{2}\log(4\pi)+\log \theta \right)d\mathrm{vol}\\
&\displaystyle\leq\left(\int_{\Sigma}f d\mathrm{vol}\right)\log {\left(\int_{\Sigma}f d\mathrm{vol}\right)}.
\end{array}
\end{equation*}

It remains to consider the case when $\Sigma$ is disconnected. For completeness, we list Brendle's proof \cite{2}. In that case, we apply the inequality to each individual connected component of $\Sigma$, and sum over all connected components. Since
\begin{equation*}
  a\log{a}+b\log{b}<a\log{(a+b)}+b\log{(a+b)}=(a+b)\log{(a+b)}
\end{equation*}
for $a,b>0$, we conclude that
\begin{equation*}
\begin{array}{lllll}
&\displaystyle\int_{\Sigma}f\left(\log f+n+\frac {n}{2}\log(4\pi)+\log \theta \right)d\mathrm{vol}-\int_{\Sigma}\frac {|\nabla^{\Sigma}f|^{2}}{f}d\mathrm{vol}-\int_{\Sigma}f|H|^{2}d\mathrm{vol}\\
&\displaystyle<\left(\int_{\Sigma}f d\mathrm{vol}\right)\log {\left(\int_{\Sigma}f d\mathrm{vol}\right)}
\end{array}
\end{equation*}
if $\Sigma$ is disconnected. This completes the proof of Theorem 1.1.

\section{Proof of Corollary 1.2}
By computing, the volume form $d\mathrm{vol}(u,v)$ is equal to $u\sqrt{1+a^{2}u^{2}}dudv$, and the intrinsic distance from the Origin to the point $\overrightarrow{r}(u,v)$ satisfies
\begin{equation*}
\begin{array}{lllll}
d(O,\overrightarrow{r}(u,v))&=\displaystyle\int_{0}^{u}\sqrt{1+a^{2}t^{2}}dt\\
&=\frac{u}{2}\sqrt{1+a^{2}u^{2}}+\frac{1}{2a}\ln{(au+\sqrt{1+a^{2}u^{2}})}.
\end{array}
\end{equation*}
So we have
\begin{equation*}
\displaystyle\int_{\Gamma}e^{-\frac{d(O,\overrightarrow{r}(u,v))^{2}}{4r^{2}}}d\mathrm{vol}(u,v)=2\pi\int_{0}^{\infty}e^{-\frac{A(u)^{2}}{4r^{2}}}u\sqrt{1+a^{2}u^{2}}du,
\end{equation*}
where $A(u)=d(O,\overrightarrow{r}(u,v))$. Since $u\leq 2A(u)$ for all $u\geq 0$, we have
\begin{equation*}
\begin{array}{lllll}
\displaystyle\int_{0}^{\infty}e^{-\frac{A(u)^{2}}{4r^{2}}}u\sqrt{1+a^{2}u^{2}}du&\leq \displaystyle\int_{0}^{\infty}e^{-\frac{A(u)^{2}}{4r^{2}}}2A(u)\sqrt{1+a^{2}u^{2}}du               \\
&=\displaystyle\int_{0}^{\infty}e^{-\frac{t}{4r^{2}}}dt\\
&=4r^{2}.
\end{array}
\end{equation*}
Note that $\lim_{u\rightarrow 0^{+}}A(u)=\frac{1}{2}u$. Thus, for any $\varepsilon>0$, we can find a positive number $\delta=\delta(\varepsilon)$ such that $u\geq \frac{2}{1+\varepsilon}A(u)$ for all $u\in[0,\delta]$. So we have
\begin{equation*}
\begin{array}{lllll}
\displaystyle\int_{0}^{\infty}e^{-\frac{A(u)^{2}}{4r^{2}}}u\sqrt{1+a^{2}u^{2}}du&\geq\displaystyle\int_{0}^{\delta}e^{-\frac{A(u)^{2}}{4r^{2}}}u\sqrt{1+a^{2}u^{2}}du\\
&\geq\frac{2}{1+\varepsilon}\displaystyle\int_{0}^{\delta}e^{-\frac{A(u)^{2}}{4r^{2}}}A(u)\sqrt{1+a^{2}u^{2}}du\\
&=\frac{4r^{2}}{1+\varepsilon}\displaystyle\int_{0}^{\frac{A(\delta)^{2}}{4r^{2}}}e^{-t}dt\\
&=\frac{4r^{2}}{1+\varepsilon}(1-e^{-\frac{A(\delta)^{2}}{4r^{2}}}).
\end{array}
\end{equation*}
And we can find a positive number $N=N(\varepsilon)$ such that $e^{-\frac{A(\delta)^{2}}{4r^{2}}}<\varepsilon$ for all $r>N$. From standard $\varepsilon-\delta$ language, we can conclude that $\theta=2$. Using Theorem 1.1, the corollary follows.

\section{Acknowledgement}
The first named author thanks Professor Yu Zheng and his classmate Yukai Sun for helpful discussions.

\end{document}